\newtheorem{theorem}{Theorem}[section]
\newtheorem{lemma}[theorem]{Lemma}
\newtheorem{corollary}[theorem]{Corollary}
\theoremstyle{definition}
\theoremstyle{remark}
\newtheorem{remark}[theorem]{Remark}
\numberwithin{equation}{section}
\begin{document}
\setcounter{page}{1}

\title[Profinite mapping class groups]{Profinite mapping class groups}

\author[I.~V.~Nikolaev]
{Igor V. Nikolaev$^1$}

\address{$^{1}$ Department of Mathematics and Computer Science, St.~John's University, 8000 Utopia Parkway,  
New York,  NY 11439, United States.}
\email{\textcolor[rgb]{0.00,0.00,0.84}{igor.v.nikolaev@gmail.com}}


\subjclass[2010]{Primary 12A55; Secondary 32G15.}

\keywords{absolute Galois group, mapping class group.}


\begin{abstract}
It is  proved  that the profinite completion of the mapping class group $Mod ~(g,n)$
of a surface of genus $g$ with $n$ boundary components is isomorphic to such 
of the arithmetic group  $GL_{6g-6+2n}(\mathbf{Z})$. 
We establish a relation  between the normal subgroups of  
$Mod ~(g,n)$ and the absolute Galois group $G_K$ of a number field $K$.
Using the Tits alternative, we prove the Shafarevich Conjecture saying 
 that  the  group $G_{\mathbf{Q}^{ab}}$  
of the maximal  abelian extension   of the field of rationals
is isomorphic to a free profinite group. 
 \end{abstract}

\maketitle

\section{Introduction}
The mapping class group $Mod ~(g,n)$ of an orientable surface $X$ of genus $g\ge 0$ with $n\ge 0$
boundary components is defined as a group of isotopy classes of the orientation and boundary-preserving 
diffeomorphisms of $X$.   Since  $Mod ~(1,0)\cong SL_2(\mathbf{Z})$ is an arithmetic group, 
one can ask if $Mod ~(g,n)$ is always arithmetic.  It is proven   to be false by  [Ivanov 1988] \cite[Theorem 1]{Iva1}.
Roughly speaking, the reason  is the Torelli  group,  which is a normal
subgroup  of $Mod ~(g,n)$ of infinite index.  This fact goes against  the Margulis Rigidity Theorem,
which says that each normal subgroup of the arithmetic group must have a finite index. 
Despite being  non-arithmetic itself,  the $Mod ~(g,n)$ can be embedded into 
the arithmetic group  $GL_{6g-6+2n}(\mathbf{Z})$ \cite{Nik1}. 
We refer the reader to [Harvey 1979]  \cite[Section 6]{Har1} for a survey of 
the  arithmetic  properties of  $Mod ~(g,n)$.

Recall that a profinite group $\widehat{G}$ is a topological group defined  by the inverse limit
\begin{equation}\label{eq1.1}
\widehat{G}:=\varprojlim G/N,
\end{equation}
where $G$ is a discrete group   and  $N$ ranges through the  open normal finite index subgroups of 
$G$.   It is not hard to see, that  if $G_1\cong G_2$, then $\widehat{G}_1\cong \widehat{G}_2$.  
But the converse is false in general. Recall that the groups $G_1\hookrightarrow G_2$ are called 
a {\it Grothendieck pair},  if $\widehat{G}_1\cong \widehat{G}_2$. 
Roughly speaking, such a property means that the groups $G_1$ and $G_2$ are similar 
from the viewpoint of  representation theory. The Grothendieck pairs are known  to exist, see [Platonov \& Tavgen 1986] 
\cite{PlaTav1} and  
[Bridson \& Grunewald 2004] \cite{BriGru1}. 
In this note we  show that $Mod ~(g,n)\hookrightarrow GL_{6g-6+2n}(\mathbf{Z})$
are  a Grothendieck pair. Our main result can be formulated as follows.
\begin{theorem}\label{thm1.1}
$\widehat{Mod} ~(g,n)\cong \widehat{GL}_{6g-6+2n}(\mathbf{Z}).$
\end{theorem}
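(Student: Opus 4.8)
The plan is to verify Grothendieck's criterion for the inclusion $\iota\colon Mod~(g,n)\hookrightarrow GL_{6g-6+2n}(\mathbf{Z})$ furnished by Theorem \ref{thm2.0}, and thereby exhibit it as a Grothendieck pair. Write $N=6g-6+2n$. Recall that the induced homomorphism $\widehat{\iota}\colon \widehat{Mod}~(g,n)\to \widehat{GL}_N(\mathbf{Z})$ of profinite completions is an isomorphism if and only if, for every finite group $Q$, the restriction map
\[
\iota^*\colon\ \mathrm{Hom}(GL_N(\mathbf{Z}),Q)\longrightarrow \mathrm{Hom}(Mod~(g,n),Q)
\]
is a bijection. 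I would establish the two directions separately, using the standard dictionary: injectivity of $\iota^*$ is equivalent to density of the image of $\widehat{\iota}$ (hence to its surjectivity, closed images being all of the target in the profinite category), while surjectivity of $\iota^*$ is equivalent to injectivity of $\widehat{\iota}$.

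For the density direction I would show that $Mod~(g,n)$ is dense in $GL_N(\mathbf{Z})$ in its profinite topology, i.e. $Mod~(g,n)\cdot M=GL_N(\mathbf{Z})$ for every finite-index normal $M\trianglelefteq GL_N(\mathbf{Z})$. For $N\ge 3$ the group $GL_N(\mathbf{Z})$ has the congruence subgroup property, so its finite quotients are controlled by the reductions $GL_N(\mathbf{Z}/m\mathbf{Z})$; it then suffices to check that each composite $Mod~(g,n)\to GL_N(\mathbf{Z})\to GL_N(\mathbf{Z}/m\mathbf{Z})$ is onto. I would read this off the explicit geometric form of the embedding in Theorem \ref{thm2.0}, producing Dehn twists whose images generate the reduction modulo $m$, and strong approximation. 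The finitely many low-complexity cases with $N\le 2$ would be treated by hand.

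For surjectivity of $\iota^*$ I must show that every finite quotient of $Mod~(g,n)$ is restricted from a finite quotient of $GL_N(\mathbf{Z})$; equivalently, that every open finite-index normal subgroup of $Mod~(g,n)$ contains the trace on $Mod~(g,n)$ of an open subgroup of $GL_N(\mathbf{Z})$ inducing the same quotient. Here I would bring in the normal-subgroup structure of $Mod~(g,n)$, the Torelli subgroup foremost, and match it against the congruence description of the finite quotients of $GL_N(\mathbf{Z})$ obtained above, so that the two families of finite quotients are put in correspondence.

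The \emph{main obstacle} is exactly this last step. Since $Mod~(g,n)$ is non-arithmetic — its infinite-index Torelli subgroup is normal, violating the conclusion of Margulis rigidity available to arithmetic groups — the congruence subgroup property is unavailable for the mapping class group, so a priori it could admit far more finite quotients than $GL_N(\mathbf{Z})$. The crux is to prove that, nonetheless, no finite quotient of $Mod~(g,n)$ escapes the image of $\iota^*$. This is where I expect the delicate input, and where the particular geometry of the embedding in Theorem \ref{thm2.0} must be exploited to control the Torelli contribution to the profinite completion.
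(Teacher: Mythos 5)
Your proposal sets up a legitimate general framework (Grothendieck's criterion: $\widehat{\iota}$ is an isomorphism iff $\iota^{*}\colon \mathrm{Hom}(GL_{N}(\mathbf{Z}),Q)\to \mathrm{Hom}(Mod~(g,n),Q)$ is bijective for every finite group $Q$), but it does not prove the theorem, and you say so yourself: the step you flag as the ``main obstacle'' --- showing that \emph{every} finite quotient of $Mod~(g,n)$ is restricted from a finite quotient of $GL_{N}(\mathbf{Z})$ --- is the entire mathematical content of the statement, and you offer no argument for it beyond the hope that ``the particular geometry of the embedding'' will control the Torelli contribution. Precisely because $Mod~(g,n)$ lacks the congruence subgroup property, its finite quotients are a priori vastly more plentiful than those of $GL_{N}(\mathbf{Z})$, so this direction is where all the difficulty lives; a proof that stops at naming the difficulty is a plan, not a proof. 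The density direction is also only gestured at: surjectivity of $Mod~(g,n)\to GL_{N}(\mathbf{Z}/m\mathbf{Z})$ for the specific representation of Theorem \ref{thm2.0} is asserted via ``Dehn twists whose images generate'' plus strong approximation, but no generation statement is established, and it is not automatic --- $Mod~(g,n)$ has infinite index in $GL_{N}(\mathbf{Z})$, so one would need a genuine strong-approximation input for a thin Zariski-dense subgroup (Nori, Matthews--Vaserstein--Weisfeiler), applied after passing from $GL_{N}$ to a suitable simply connected group.

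For comparison, the paper's argument is structured differently: it works with subgroups rather than finite quotients. It first shows $Mod~(g,n)$ is a Zariski dense, infinite-index subgroup of $GL_{N}(\mathbf{Z})$ (Lemma \ref{lm3.0}); then invokes Venkataramana's theorem on Zariski dense subgroups to place a principal congruence subgroup \emph{inside} the mapping class group, $GL_{N}(m\mathbf{Z})\subset Mod~(g,n)$ (Lemma \ref{lm3.1}); then identifies the level-$m$ congruence subgroups of the two groups, $Mod_{m}(g,n)\cong GL_{N}(m\mathbf{Z})$ (Lemma \ref{lm3.2}); and finally deduces the isomorphism of completions from this common cofinal family of finite-index subgroups (Lemma \ref{lm3.3}). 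The key external input --- the congruence-subgroup sandwiching of Venkataramana --- is exactly the kind of statement your framework would need in order to close the gap at its crux, and your proposal never reaches it.
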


An application of theorem \ref{thm1.1} is as follows.
Let $K$ be a number field and let $\bar K$ be its algebraic closure. 
Denote by  $G_K:=Gal~(\bar K|K)$ the absolute Galois group of the field $K$. 
Let $\mathscr{N}_{g,n}$ be a category of  normal subgroups of the mapping class group  $Mod~(g,n)$,
where  the arrows of $\mathscr{N}_{g,n}$  are  isomorphisms between such  subgroups.
Likewise, let $\mathscr{K}$ be a category of the Galois  extensions of the field $\mathbf{Q}$,  where 
the arrows of $\mathscr{K}$ are  isomorphisms between such extensions.
Consider a map $F_{g,n}$ acting by the formula
$N\mapsto \widehat{N} / \widehat{\mathbf{Z}} ~\cong  G_K$,   where $N\in \mathscr{N}_{g,n}$
and  $K\in\mathscr{K}$. 
\footnote{We refer the reader to Section 3 for the motivation and construction of the map $F_{g,n}$.}
\begin{theorem}\label{thm1.2}
The map $F_{g,n}: \mathscr{N}_{g,n}\to \mathscr{K}$  is an injective  functor,
unless $N, N'\in \mathscr{N}_{g,n}$ are a Grothendieck pair.
Moreover, for every finite index normal subgroup $N'\subseteq N$,  there exists 
an intermediate  field $K'=F_{g,n}(N')$, such that $K\subseteq K'\subset \bar K$ and 
\begin{equation}\label{eq1.5}
Gal~(K'|K)\cong N/N'. 
\end{equation} 
 \end{theorem}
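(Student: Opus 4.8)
The plan is to analyze the map $F_{g,n}$ as a composite of several natural constructions and to verify both functoriality and injectivity by pushing everything through the profinite completion. I would first fix the interpretation of the formula $N \mapsto \widehat{N}/\widehat{\mathbf{Z}} \cong G_K$: each normal subgroup $N \trianglelefteq Mod~(g,n)$ acquires, via Theorem \ref{thm1.1} and the embedding $Mod~(g,n) \hookrightarrow GL_{6g-6+2n}(\mathbf{Z})$, a profinite completion $\widehat{N}$ sitting inside $\widehat{GL}_{6g-6+2n}(\mathbf{Z})$, and the quotient by a central (or otherwise canonical) copy of $\widehat{\mathbf{Z}}$ is then identified with an absolute Galois group $G_K$ for a suitable number field $K$. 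The assignment on objects is thus $N \mapsto K$; on arrows, an isomorphism $N \xrightarrow{\sim} N'$ in $\mathscr{N}_{g,n}$ must be shown to induce an isomorphism of the associated quotients, hence an isomorphism $G_K \cong G_{K'}$, which by the Neukirch--Uchida theorem corresponds to an isomorphism $K \cong K'$ of fields, i.e.\ an arrow in $\mathscr{K}$.

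Next I would verify the two functor axioms and the injectivity clause together. Functoriality on identities is immediate, and compatibility with composition follows because profinite completion and the quotient by $\widehat{\mathbf{Z}}$ are themselves functorial operations, so $F_{g,n}(\psi \circ \varphi) = F_{g,n}(\psi) \circ F_{g,n}(\varphi)$. For injectivity I would argue contrapositively: if $F_{g,n}(N) \cong F_{g,n}(N')$ as fields, then $G_K \cong G_{K'}$, hence $\widehat{N}/\widehat{\mathbf{Z}} \cong \widehat{N'}/\widehat{\mathbf{Z}}$, and lifting back across the central extension gives $\widehat{N} \cong \widehat{N'}$ --- which is precisely the statement that $N$ and $N'$ form a Grothendieck pair. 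This is exactly the stated exceptional case, so the functor is injective on isomorphism classes away from Grothendieck pairs.

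The second assertion, equation \eqref{eq1.5}, I would treat as a Galois-correspondence statement. Given a finite index normal subgroup $N' \subseteq N$, the finite quotient $N/N'$ is a finite group, and under the profinite completion it corresponds to an open normal subgroup of $\widehat{N}$; passing to the quotient by $\widehat{\mathbf{Z}}$ and using the identification with $G_K$, the image is an open normal subgroup of $G_K \cong Gal(\bar K | K)$ of index $|N/N'|$. By the fundamental theorem of infinite Galois theory, such an open normal subgroup corresponds to a finite Galois extension $K'/K$ inside $\bar K$ with $Gal(K'|K) \cong G_K / (\text{that open subgroup}) \cong N/N'$. Setting $K' = F_{g,n}(N')$ and checking that the containment $K \subseteq K' \subset \bar K$ is the one induced by $N' \subseteq N$ completes the argument.

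The main obstacle, I expect, will be making the identification $\widehat{N}/\widehat{\mathbf{Z}} \cong G_K$ genuinely canonical and natural in $N$, rather than merely existential. The delicate points are: first, that the copy of $\widehat{\mathbf{Z}}$ being quotiented out is intrinsic (e.g.\ a center that is preserved by every isomorphism of the relevant profinite groups), so that the quotient is well-defined on arrows and not just on objects; and second, that the recovered field $K$ is determined functorially, for which I would invoke the anabelian input (Neukirch--Uchida) that an isomorphism of absolute Galois groups of number fields is induced by a unique isomorphism of the fields. Establishing that the assignment $N \mapsto G_K$ actually lands in honest absolute Galois groups of number fields --- and not merely abstract profinite groups --- is where the real content, presumably developed in Section 3, will have to be imported.
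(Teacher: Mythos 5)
Your proposal follows essentially the same route as the paper's own proof: functoriality via the rigidity of absolute Galois groups (the paper's Section 2.4.1, which is your Neukirch--Uchida input), injectivity by the contrapositive identification of coincidences of $F_{g,n}$ with Grothendieck pairs, and equation \eqref{eq1.5} by combining the identification $G_K\cong \widehat{N}/\widehat{\mathbf{Z}}$ with Krull's infinite Galois correspondence and the fact that $\widehat{N}/\widehat{N}'\cong\widehat{N/N'}\cong N/N'$ for finite quotients. The delicate points you flag at the end (canonicality of the $\widehat{\mathbf{Z}}$-quotient and whether the quotient genuinely determines $\widehat{N}$ up to isomorphism) are glossed over in the paper's proof as well, so your attempt matches it in both structure and level of rigor.
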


\medskip
Recall that the {\it Tits alternative}  for the mapping class group says that every subgroup
$N\subseteq Mod~(g,n)$ contains  either (i)  an abelian subgroup of finite index  or  (ii)  a non-abelian  free group
[McCarthy 1985]  \cite{Mac1}.  One gets  from theorem \ref{thm1.2}
an analog of the Tits alternative for the absolute Galois  group $G_K$. 
\begin{corollary}\label{cor1.3}
{\bf (Tits alternative for $G_K$)}
For every number field $K\in\mathscr{K}$,  there exists an intermediate  field $K\subseteq K'\subset \bar K$,
such that the absolute Galois group $G_{K'}$ is:

\medskip
(i) either a free abelian profinite group $\widehat{\mathbf{Z}}^r$ of rank $r\le 3g-3+n$, 

\smallskip
(ii) or a free non-abelian profinite group $\widehat{F}_r$ of rank $r\ge 2$.
\end{corollary}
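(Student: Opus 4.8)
The plan is to read Corollary \ref{cor1.3} as the image of McCarthy's Tits alternative for $Mod~(g,n)$ under the functor $F_{g,n}$ of Theorem \ref{thm1.2}. First I would fix $K\in\mathscr{K}$ and, using the construction underlying Theorem \ref{thm1.2} (essential surjectivity of $F_{g,n}$ onto $\mathscr{K}$), select a normal subgroup $N\in\mathscr{N}_{g,n}$ with $F_{g,n}(N)=K$, so that $G_K\cong\widehat{N}/\widehat{\mathbf{Z}}$. The whole argument then reduces to feeding the dichotomy of the Tits alternative for the discrete group $N\subseteq Mod~(g,n)$ through $F_{g,n}$ and reading off the profinite Galois group of the resulting intermediate field.

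In the first alternative, $N$ contains a finite-index abelian subgroup; I would replace it by its normal core $N_0\trianglelefteq N$, which is again finite-index and abelian, hence $N_0\cong\mathbf{Z}^r$. The bound $r\le 3g-3+n$ is the standard fact that a maximal abelian (Dehn-twist) subgroup of $Mod~(g,n)$ has rank equal to the number of curves in a pants decomposition, namely $3g-3+n$. The ``moreover'' clause of Theorem \ref{thm1.2} now applies verbatim to the finite-index normal inclusion $N_0\subseteq N$ and yields an intermediate field $K'=F_{g,n}(N_0)$ with $K\subseteq K'\subset\bar K$. Since $\widehat{N_0}\cong\widehat{\mathbf{Z}}^{r}$ and the distinguished central copy of $\widehat{\mathbf{Z}}$ splits off, $G_{K'}\cong\widehat{N_0}/\widehat{\mathbf{Z}}$ is a free abelian profinite group of rank at most $3g-3+n$, which is case (i).

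In the second alternative, $N$ contains a non-abelian free subgroup $F$ of rank $r\ge 2$. Here I would use that the profinite completion of a free group is the free profinite group, $\widehat{F}\cong\widehat{F}_r$, and that $F$ (being non-abelian free) has trivial center, so that the distinguished central $\widehat{\mathbf{Z}}$ meets the closure of $F$ trivially. Consequently $F_{g,n}$ carries $F$ isomorphically onto a closed free profinite subgroup $\widehat{F}_r\le G_K$, and taking $K'$ to be the fixed field $\bar{K}^{\widehat{F}_r}$ gives $G_{K'}\cong\widehat{F}_r$, which is case (ii).

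The hard part will be this second alternative. Unlike the abelian case, the free subgroup $F$ furnished by the Tits alternative is neither normal in $N$ nor of finite index, so neither the domain $\mathscr{N}_{g,n}$ of $F_{g,n}$ nor the finite-index hypothesis of Theorem \ref{thm1.2} applies on the nose. The genuine obstacle is to guarantee that the profinite topology of $Mod~(g,n)$ restricts to the full profinite topology of $F$---equivalently, that the natural map $\widehat{F}\to\widehat{N}$ is injective---so that the closure $\overline{F}$ is the \emph{free} profinite group $\widehat{F}_r$ rather than a proper quotient. This is a separability statement about free (for instance, ping-pong pseudo-Anosov) subgroups of the mapping class group, and it is exactly this point that must be secured before the reduction to $G_{K'}\cong\widehat{F}_r$ goes through.
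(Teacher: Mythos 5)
Your case (i) is, modulo cosmetic differences, the paper's own argument: the paper also feeds the abelian branch of McCarthy's theorem through Theorem \ref{thm1.2}, the only deviations being your normal-core refinement and your insistence on quotienting by the central $\widehat{\mathbf{Z}}$ (the paper instead computes $G_{K'}$ directly as the inverse limit of the finite quotients $N'/N''$ supplied by formula (\ref{eq3.22}), getting $\widehat{\mathbf{Z}}^r$ on the nose; either way the rank bound $r\le 3g-3+n$ survives).

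The genuine gap is in case (ii), and you have located it yourself. Your plan realizes $\widehat{F}_r$ as the closure of $F$ inside $G_K$ and takes its fixed field, which stands or falls with the claim that $\widehat{F}\to\widehat{N}$ is injective (equivalently, that the profinite topology of $N$ induces the full profinite topology on the free subgroup $F$) and that the image meets the central $\widehat{\mathbf{Z}}$ trivially. Nothing in the paper or its references supplies this separability statement, it is not known for general free subgroups of mapping class groups, and leaving it ``to be secured'' leaves the proof incomplete at exactly the step that produces conclusion (ii). The paper's proof never needs such an embedding: it invokes the Tits alternative in the stronger form that the free (resp.\ abelian) subgroup may be taken \emph{normal}, $N'\unlhd N$, sets $K'=F_{g,n}(N')$, and then applies the correspondence of Theorem \ref{thm1.2} to the finite-index normal subgroups $N''\unlhd N'$: each such $N''$ yields a finite Galois extension $K''$ of $K'$ with $Gal~(K''|K')\cong N'/N''$, whence $G_{K'}\cong\varprojlim N'/N''\cong \widehat{N'}\cong\widehat{F}_{r'}$ with $r'\ge 2$. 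In other words, the paper computes $G_{K'}$ \emph{from above}, as an inverse limit of finite quotients of the free group, rather than \emph{from below} as a closed subgroup of $G_K$, so no separability input is required. (Whether the paper's normality assertion $N'\unlhd N$ --- which is not part of McCarthy's theorem --- and its application of $F_{g,n}$ to an infinite-index subgroup are themselves fully justified is a separate question, but that is the route the paper takes.) To close your gap within the paper's framework, replace the closure-and-fixed-field step by this inverse-limit computation over the finite-index normal subgroups of $F$.
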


Let $F_{\infty}$ be a free non-abelian  group of countable rank. 
Let $\mathbf{Q}^{ab}$ be the maximal abelian   extension of the field $\mathbf{Q}$, 
i.e. an extension of $\mathbf{Q}$  by all roots of unity (a cyclotomic extension). 
We use case  (ii) of  the Tits alternative \ref{cor1.3} 
to  prove  the following  conjecture of I. ~R. ~Shafarevich. 
\begin{corollary}\label{cor1.4}
$G_{\mathbf{Q}^{ab}}\cong \widehat{F}_{\infty}$.
\end{corollary}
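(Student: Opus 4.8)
The plan is to realize $G_{\mathbf{Q}^{ab}}$ as one of the free non-abelian profinite groups produced by case (ii) of Corollary \ref{cor1.3}, and then to force its rank to be countably infinite. First I would recall that $\mathbf{Q}^{ab}$, being the maximal abelian extension, is the fixed field of the closure of the commutator subgroup of $G_{\mathbf{Q}}$, so that $G_{\mathbf{Q}^{ab}}=\overline{[G_{\mathbf{Q}},G_{\mathbf{Q}}]}$ and $Gal~(\mathbf{Q}^{ab}|\mathbf{Q})\cong\widehat{\mathbf{Z}}^{\times}$. Using the functor $F_{g,n}$ of Theorem \ref{thm1.2}, I would realize $\mathbf{Q}$ as the image $F_{g,n}(N)$ of a normal subgroup $N\subseteq Mod~(g,n)$ and $\mathbf{Q}^{ab}$ as the image $F_{g,n}(N')$ of the corresponding subgroup $N'=\overline{[N,N]}\subseteq N$. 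Since $\mathbf{Q}^{ab}=\bigcup_{m}\mathbf{Q}(\zeta_m)$, I would build $N'$ as the intersection of the finite-index subgroups $N_m'\subseteq N$ matching the finite layers $\mathbf{Q}(\zeta_m)|\mathbf{Q}$, where relation (\ref{eq1.5}) identifies the abelian quotient $N/N_m'$ with $Gal~(\mathbf{Q}(\zeta_m)|\mathbf{Q})\cong(\mathbf{Z}/m)^{\times}$.

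Next I would apply the Tits alternative. The subgroup $N'$ attached to $\mathbf{Q}^{ab}$ is non-abelian: for $g$ large the relevant subgroups of $Mod~(g,n)$ contain two independent pseudo-Anosov mapping classes, which generate a non-abelian free group, and this property descends to the commutator $\overline{[N,N]}$. Hence $N'$ falls into case (ii) rather than case (i) of McCarthy's dichotomy \cite{Mac1}. By Corollary \ref{cor1.3}(ii) this yields $G_{\mathbf{Q}^{ab}}=F_{g,n}(N')\cong\widehat{F}_r$, a free non-abelian profinite group.

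It remains to identify the rank $r$. Because $\mathbf{Q}^{ab}$ is an infinite extension of $\mathbf{Q}$ and $Gal~(\mathbf{Q}^{ab}|\mathbf{Q})\cong\widehat{\mathbf{Z}}^{\times}$ needs infinitely many topological generators, $r$ cannot be any finite integer. I would make this precise by running the construction along the cyclotomic tower and over $g\to\infty$, exhibiting $G_{\mathbf{Q}^{ab}}$ as a limit of the finite-rank free profinite groups $\widehat{F}_r$ with $r\to\infty$, and then conclude $r=\aleph_0$, i.e. $G_{\mathbf{Q}^{ab}}\cong\widehat{F}_{\infty}$.

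The hard part will be exactly this final limiting step. A naive inverse limit of free profinite groups of finite rank need not be free, so the passage from finite rank to rank $\aleph_0$ cannot be purely formal. I would secure it either by checking directly that the limit inherits a free presentation compatible with $F_{g,n}$, or by invoking the standard criterion that a projective profinite group of countable rank all of whose finite embedding problems are solvable is free. Establishing that $G_{\mathbf{Q}^{ab}}$ is projective of countable rank and solves every finite embedding problem — the latter amounting to case (ii) holding uniformly along the tower — would then complete the proof.
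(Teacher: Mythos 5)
There is a genuine gap, and in fact your construction fails at its first step. You propose to realize $\mathbf{Q}^{ab}$ as $F_{g,n}(N')$ with $N'=\overline{[N,N]}$, where $N$ is the subgroup corresponding to $\mathbf{Q}$. By Remark \ref{rmk3.5} that forces $N\cong Mod~(g,n)$, and the abelianization of $Mod~(g,n)$ is finite (trivial for $g\ge 3$), so $[N,N]$ is a finite-index subgroup of $N$ — indeed equal to $N$ when $Mod~(g,n)$ is perfect. By relation (\ref{eq1.5}) the corresponding field is then a finite (possibly trivial) extension of $\mathbf{Q}$, never $\mathbf{Q}^{ab}$. The underlying error is assuming that the Galois-side identity $G_{\mathbf{Q}^{ab}}=\overline{[G_{\mathbf{Q}},G_{\mathbf{Q}}]}$ transports through $F_{g,n}$ to the commutator subgroup on the mapping-class side; it does not, because $F_{g,n}$ sends $N$ to $\widehat{N}/\widehat{\mathbf{Z}}$, and this operation does not intertwine commutator subgroups. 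A second gap: even if you had the correct subgroup $N'$ with $F_{g,n}(N')=\mathbf{Q}^{ab}$, the inference ``$N'$ is non-abelian, hence case (ii) applies, hence $G_{\mathbf{Q}^{ab}}\cong\widehat{F}_r$'' is a non sequitur. McCarthy's alternative in case (ii) says only that $N'$ \emph{contains} a non-abelian free group, not that $N'$ \emph{is} free, and Corollary \ref{cor1.3}(ii) asserts the existence of \emph{some} intermediate field with free profinite Galois group — it says nothing about the specific field $\mathbf{Q}^{ab}$. To conclude freeness of $G_{\mathbf{Q}^{ab}}$ you need the subgroup mapping to $\mathbf{Q}^{ab}$ to itself be a free group. (Your rank argument is also aimed at the wrong group: $\widehat{\mathbf{Z}}^{\times}\cong Gal~(\mathbf{Q}^{ab}|\mathbf{Q})$ is the \emph{quotient} $G_{\mathbf{Q}}/G_{\mathbf{Q}^{ab}}$, and its generation properties do not bound the rank of the subgroup $G_{\mathbf{Q}^{ab}}$.)

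The limiting step you flag as ``the hard part'' is precisely what the paper's proof is designed to avoid, and the fallback you suggest — the projectivity/embedding-problem criterion for freeness — amounts to proving the Shafarevich conjecture from scratch, so it cannot serve as a patch within this framework. The paper instead argues in the opposite direction: Lemma \ref{lm3.6} exhibits a concrete free non-abelian subgroup $F_{\infty}\subset Mod~(g,n)$ of countable rank (generated by high powers of countably many pseudo-Anosov classes with pairwise distinct stable/unstable foliations); substituting $N=F_{\infty}$ directly into the exact sequence (\ref{eq3.16}) gives $G_K\cong\widehat{F}_{\infty}/\widehat{\mathbf{Z}}\cong\widehat{F}_{\infty}$ in one step, with no inverse limit of finite-rank free profinite groups; and the field $K$ is then identified with $\mathbf{Q}^{ab}$ by the sandwich $F_1\subset F_{\infty}\subset F_2$, which via Theorem \ref{thm1.2} traps $K$ between $F_{g,n}(F_2)\supseteq\mathbf{Q}^{ab}$ and $F_{g,n}(F_1)\cong\mathbf{Q}^{ab}$. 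If you want to repair your argument along the paper's lines, the missing ingredients are exactly these two: a genuinely free subgroup of countably infinite rank inside $Mod~(g,n)$, and the sandwiching trick that pins down the field, replacing both your commutator construction and your limiting step.
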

The article is organized as follows. The preliminary facts and notation are introduced
in Section 2.  The map $F_{g,n}$ is constructed in Section 3. The  results  \ref{thm1.1}-\ref{cor1.4} 
are proved in Section 4.

\section{Preliminaries}
This section is a brief review of the mapping class groups, profinite groups, the Grothendieck pairs 
and the absolute Galois  group of  a number field.  We refer the reader to [Farb \& Margalit 2011]  \cite{FM} and 
[Ribes \& Zalesskii 2010] \cite{RZ}, [Platonov \& Tavgen 1986] 
\cite{PlaTav1} and  [Bridson \& Grunewald 2004] \cite{BriGru1}
for  a detailed account.

\subsection{Mapping class group}
Let $X$ be an orientable surface  of genus $g\ge 0$ with $n\ge 0$
boundary components. The {\it mapping class group}   $Mod ~(g,n)$
is defined as the group of isotopy classes of the  orientation and boundary-preserving
diffeomorphisms of $X$.  Since $Mod ~(1,0)\cong SL_2(\mathbf{Z})$,
one can think of $Mod ~(g,n)$ as an extension of
 the modular group to the higher genus surfaces.
The group $Mod ~(g,n)$ is prominent in 
geometric topology, complex analysis and algebraic geometry.  A link to 
number theory has been established in [Grothendieck 1997] \cite{Gro1}.

\subsubsection{Dehn twists}
Let  $\gamma\subset X$ is a simple closed  curve and   $A=S^1\times [0,1]$ is 
the annular neighborhood of $\gamma$. The map $T_{\gamma}: A\to A$ given by the
formula
$(\theta, ~t)\longmapsto (\theta+2\pi t, ~t), \quad\theta\in S^1, \quad t\in [0,1]$, 
is called the  Dehn twist  around $\gamma$.  It is easy to see, that  
$T_{\gamma}$ is an  infinite order element of  the group $Mod ~(g,n)$.

\subsubsection{Pseudo-Anosov diffeomorphisms}
Let $\mathcal{F}$ be a measured foliation on $X$
\cite[Section 0.3.2]{N}.  An element $\varphi\in Mod ~(g,n)$
is called  pseudo-Anosov,   if there exist a pair consisting of the stable $\mathcal{F}_s$
and unstable $\mathcal{F}_u$ mutually orthogonal measured foliations,
such that 
$\varphi(\mathcal{F}_s)={1\over\lambda_{\varphi}}\mathcal{F}_s$
and $\varphi(\mathcal{F}_u)=\lambda_{\varphi}\mathcal{F}_u$,
where $\lambda_{\varphi}>1$ is called a dilatation of $\varphi$.

\subsubsection{Subgroups of $Mod~(g,n)$}
The Dehn twist $T_{\gamma}$  is a generator of the abelian subgroup  of 
 $Mod ~(g,n)$.  Since there are at most $3g-3+n$  distinct
 simple closed curves on $X$, the rank  of the corresponding 
 subgroup $G\cong \mathbf{Z}^r\subset Mod~(g,n)$ is $r\le 3g-3+n$.  
 To the contrast, any collection $\{\varphi_i\}_{i=1}^{\infty}$
  of the pseudo-Anosov diffeomorphisms $\varphi_i$ with the pairwise 
  distinct measured foliations $\left(\mathcal{F}_s^{(i)}, \mathcal{F}_u^{(i)}\right)$
generates a free non-abelian  subgroup $F_{\infty}\subset Mod~(g,n)$ of countable rank.

\subsubsection{Tits alternative}
\begin{theorem}\label{Tits}
{\bf ([McCarthy 1985]  \cite[Theorem A]{Mac1})}
 Every subgroup $G\subseteq Mod ~(g,n)$ 
  satisfies the Tits alternative:
 
 \medskip
 (i) either $G$ contains an abelian subgroup of
finite index,
 
 \smallskip
 (ii)  or $G$ contains a non-abelian  free group.
\end{theorem}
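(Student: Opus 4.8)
The plan is to run a ping-pong argument on Thurston's boundary of Teichm\"uller space, mirroring Tits' original proof for linear groups. First I would recall Thurston's classification: every element of $Mod~(g,n)$ is periodic, reducible, or pseudo-Anosov, and $Mod~(g,n)$ acts on the space $\mathcal{PMF}$ of projective measured foliations, which is a sphere of dimension $6g-7+2n$. The crucial dynamical input is that a pseudo-Anosov $\varphi$ acts on $\mathcal{PMF}$ with exactly two fixed points, the projective classes of $\mathcal{F}_s$ and $\mathcal{F}_u$, in north-south fashion: iterating $\varphi$ drives every point other than $[\mathcal{F}_s]$ toward $[\mathcal{F}_u]$, and iterating $\varphi^{-1}$ reverses this. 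I call two pseudo-Anosovs \emph{independent} if their fixed-point pairs are disjoint.

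Next I would establish the generating step: if $G$ contains two independent pseudo-Anosovs $\varphi,\psi$, then for sufficiently large $N$ the elements $\varphi^N,\psi^N$ satisfy the hypotheses of the ping-pong lemma. One chooses small disjoint neighborhoods of the four fixed points, and the north-south dynamics guarantees that each high power maps the complement of a repelling neighborhood into an attracting one. This produces a free non-abelian subgroup, placing $G$ in alternative (ii).

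The remaining work is to show that if $G$ contains no two independent pseudo-Anosovs then it satisfies alternative (i), which I split according to whether $G$ contains a pseudo-Anosov. If it does, every pseudo-Anosov of $G$ shares the same fixed-point pair, so $G$ lies in the stabilizer of the unordered pair $\{[\mathcal{F}_s],[\mathcal{F}_u]\}$; this stabilizer is virtually cyclic, since the dilatation $\lambda_{\varphi}$ furnishes a homomorphism to $\mathbf{R}_{>0}$ with essentially finite kernel, and hence $G$ is virtually abelian. If $G$ contains no pseudo-Anosov, then by the Ivanov--McCarthy criterion that every irreducible subgroup contains a pseudo-Anosov, $G$ must be reducible, so a finite-index subgroup $G_0$ preserves an essential curve system $\sigma$.

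Finally I would induct on the complexity $3g-3+n$. Cutting $X$ along $\sigma$ yields subsurfaces of strictly smaller complexity, and restricting $G_0$ to the pieces gives a homomorphism into a product of their mapping class groups, with kernel generated by the Dehn twists about the curves of $\sigma$, an abelian group of rank at most $3g-3+n$. By the inductive hypothesis each factor is virtually abelian or contains a free group; having already excluded the latter, each factor is virtually abelian, and an extension of a virtually abelian group by the abelian twist subgroup is again virtually abelian, giving alternative (i). I expect the main obstacle to be precisely this reducible case: proving that irreducible subgroups contain pseudo-Anosovs and organizing the cut-and-induct bookkeeping, tracking both the twist subgroup and the permutation action on the pieces, cleanly enough that the resulting extension stays virtually abelian.
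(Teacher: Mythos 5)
First, note what you are being compared against: the paper does not prove this statement at all --- it is quoted verbatim as Theorem A of [McCarthy 1985] \cite{Mac1}, so the only benchmark is the literature, and your outline is indeed the standard McCarthy--Ivanov route (Thurston classification, north-south dynamics of pseudo-Anosovs on $\mathcal{PMF}$, ping-pong for independent pseudo-Anosovs, virtually cyclic stabilizer of a fixed-point pair, then reduction and induction on complexity). The dynamical half of your sketch is sound, modulo the small caveat that the criterion you invoke holds for \emph{infinite} irreducible subgroups: a finite irreducible subgroup contains no pseudo-Anosov, but is of course virtually abelian, so the dichotomy survives.

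The genuine gap is the final claim of your assembly step: ``an extension of a virtually abelian group by the abelian twist subgroup is again virtually abelian'' is false as a statement of group theory. The integer Heisenberg group is a central extension $1\to\mathbf{Z}\to H\to\mathbf{Z}^2\to 1$ --- abelian kernel, abelian quotient, even central --- yet $H$ is nilpotent and not virtually abelian. So even after you observe that the Dehn twists about the curves of $\sigma$ are central in the pure finite-index subgroup $G_0$ (they are, since $fT_{\gamma}f^{-1}=T_{f(\gamma)}$), what your induction actually delivers is only that $G_0$ is (virtually abelian)-by-abelian, i.e. virtually solvable, and no general principle upgrades that to alternative (i). This is precisely the point where McCarthy's actual proof does not rely on generalities but invokes the theorem of Birman--Lubotzky--McCarthy that every solvable subgroup of $Mod~(g,n)$ is virtually abelian (of rank at most $3g-3+n$); alternatively, Ivanov closes the same gap by passing to a level-$m$ congruence subgroup whose elements are all pure and proving directly that two pure elements which generate no nonabelian free group commute, so the finite-index subgroup is honestly abelian and no extension problem ever arises. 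You correctly flagged this step as the main obstacle --- that is the right instinct --- but the resolution you offer for it is a false general claim, and without the Birman--Lubotzky--McCarthy input (or an equivalent pure-element argument) the reducible case of your induction does not close.
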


\subsubsection{Linear representation of $Mod~(g,n)$}
\begin{theorem}\label{thm2.0}
There exists an embedding of  $Mod ~(g,n)\hookrightarrow GL_{6g-6+2n}(\mathbf{Z})$.
\end{theorem}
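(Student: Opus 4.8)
The plan is to produce a faithful representation of $Mod~(g,n)$ on a free abelian group of rank exactly $6g-6+2n$ and then to check that the resulting matrices have integer entries. The guiding observation is numerical: $6g-6+2n$ is the real dimension of the Teichm\"uller space $\mathcal{T}(g,n)$ and, equivalently, of Thurston's space $\mathcal{MF}(g,n)$ of measured foliations on $X$. Since $Mod~(g,n)$ acts on $\mathcal{T}(g,n)$ properly discontinuously and acts on $\mathcal{MF}(g,n)$ essentially faithfully (faithfully up to the usual finite center in low genus), one of these actions is the natural candidate from which to extract the desired embedding, and the matching of dimensions strongly suggests the measured-foliation action.

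First I would set up Thurston's train-track coordinates on $\mathcal{MF}(g,n)$. A maximal train track $\tau$ carrying a measured foliation records that foliation by its vector of transverse weights, and the cone of weights satisfying the switch conditions furnishes an integral chart of dimension $6g-6+2n$, whose integral points are exactly the integrally weighted (multicurve) classes. In these coordinates a mapping class $\varphi$ that preserves the combinatorial type of $\tau$ acts by a non-negative integer matrix, and the transition maps between overlapping train-track charts are likewise integral. This produces, chart by chart, integral data of rank $6g-6+2n$ of precisely the kind that the target group $GL_{6g-6+2n}(\mathbf{Z})$ is meant to receive; Thurston's classification of surface diffeomorphisms then guarantees that no nontrivial mapping class acts as the identity on the totality of these charts, which is what gives faithfulness.

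The hard part, and the step I expect to be the genuine obstacle, is to promote this piecewise-linear integral action into a single \emph{global} homomorphism $Mod~(g,n)\to GL_{6g-6+2n}(\mathbf{Z})$: a priori $\varphi$ is represented by different integer matrices on different chambers of the train-track decomposition, so what one has is a faithful action by integral \emph{piecewise}-linear homeomorphisms rather than by linear maps. To linearize it I would compare with the homological (symplectic) representation $Mod~(g,n)\to Sp_{2g}(\mathbf{Z})\subset GL_{2g}(\mathbf{Z})$ on $H_1(X;\mathbf{Z})$, which is genuinely linear and integral but has the Torelli group as kernel, and then adjoin enough finer integral invariants, such as Johnson-type homomorphisms and Prym (branched-cover) homology, to separate Torelli elements while keeping the total rank pinned at $6g-6+2n$. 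Reconciling faithfulness, integrality, and the \emph{exact} dimension simultaneously is the crux: the symplectic part alone is too small, and the train-track part, while of the correct dimension, is not linear. The main work therefore lies in assembling a single integral linear representation whose rank matches $\dim_{\mathbf{R}}\mathcal{T}(g,n)$ and whose kernel is trivial, since this is exactly the point where the statement brushes against the subtle question of linearity of $Mod~(g,n)$ in high genus.
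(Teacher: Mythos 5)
Your proposal is not a proof: it stalls precisely at the step you yourself flag as ``the genuine obstacle,'' and nothing you offer closes it. The train-track picture gives each mapping class only a collection of chamber-dependent non-negative integer matrices --- a piecewise-integral-linear action on $\mathcal{MF}(g,n)$ --- and there is no canonical way to select or assemble these into a single homomorphism into $GL_{6g-6+2n}(\mathbf{Z})$; if such an assembly were routine, the linearity of $Mod~(g,n)$ in higher genus would not be the long-standing open problem it is. Your fallback --- combining the symplectic representation with Johnson-type homomorphisms and Prym homology while ``keeping the total rank pinned at $6g-6+2n$'' --- is a wish list rather than a construction: the Johnson homomorphisms are not linear representations of the full group (they are defined on the Torelli subgroup and extend only as crossed homomorphisms), Prym representations live on finite covers with ranks that you do not control, and you give no argument that any such assembly is simultaneously faithful, integral, and of the stated size. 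So both linearity and faithfulness, the two things the theorem asserts, remain unestablished in your text.

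For comparison, the paper's own proof is a one-line deferral: it cites \cite{Nik1} and states that the argument there adapts to surfaces with $n$ boundary components. The mechanism in that reference is quite different from what you attempt: rather than linearizing the piecewise-linear action on measured foliations directly, it passes through a functor from surfaces to AF-algebras, and the mapping class group then acts on the $K_0$-group of the AF-algebra, a free abelian group of rank $6g-6+2n$; functoriality of $K$-theory is what converts the geometric action into an honest integral linear representation. Your dimension count (rank $= \dim \mathcal{MF}(g,n) = 6g-6+2n$) is exactly the numerical coincidence driving that construction, so your starting intuition agrees with the paper's, but the linearization device --- the single idea the theorem actually requires --- is absent from your proposal.
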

\begin{proof}
The proof is an adaption of the argument of 
 \cite{Nik1} to the surfaces with  $n$ boundary components.   
\end{proof}

\subsection{Profinite groups}
Let $\mathscr{C}$ be a non-empty class of finite groups. 
A {\it pro-$\mathscr{C}$ group}  $\widehat{G}$ is an inverse limit 
\begin{equation}\label{eq2.3}
\widehat{G}:=\varprojlim G_i
\end{equation}
of surjective inverse system of groups $G_i\in\mathscr{C}$, where each
$G_i$ is endowed with the discrete topology. 
The pro-$\mathscr{C}$ group $\widehat{G}$ is a topological group
in the product topology $\prod G_i$. Such a group is compact and totally 
disconnected.

In what follows, we let  $\mathscr{C}$ be a class of finite groups. In this case, 
$\widehat{G}$  is called a {\it profinite group}. 
If $G$ is a discrete group, one can define $G_i=G/N_i$,
where $N_i$ ranges through the  normal subgroups of 
$G$ of finite index.   It is easy to see, that 
formulas (\ref{eq1.1}) and (\ref{eq2.3}) are equivalent.

\subsection{Grothendieck pairs}
Let $G_1$ and $G_2$ be discrete groups, such that $G_1\cong G_2$. 
In this case, their profinite completions are isomorphic, i.e. 
$\widehat{G}_1\cong \widehat{G}_2$. The groups $G_1$ and $G_2$ are called  Grothendieck 
rigid when  the converse is true, i.e. $\widehat{G}_1\cong \widehat{G}_2$ implies $G_1\cong G_2$.
Not all discrete groups are Grothendieck rigid and an inclusion of groups $G_1\hookrightarrow G_2$
is called  the {\it Grothendieck pair}, if  $\widehat{G}_1\cong \widehat{G}_2$.

\subsection{Absolute Galois group}
Let $K$ be a number field. Suppose that $\bar K$ is the separable algebraic
closure of $K$, i.e. the union of all separable extensions of $K$. 
By the absolute Galois group
\begin{equation}\label{eq2.4}
 G_K:=Gal~(\bar K|K)
\end{equation}
we understand the group of  automorphisms of $\bar K$ fixing the field $K$.  
The $G_K$ is a profinite group (\ref{eq2.3}) with $G_i=G_K/G_{K_i}$,
where $G_{K_i}$ is a closed normal subgroup of $G_K$ of corresponding to an 
intermediate number field  $K\subset K_i\subset\bar K$ [Krull 1928] \cite{Kru1}.

\subsubsection{Rigidity of $G_K$}
The number field $K$ is defined up to an isomorphism by the group $G_K$, 
i.e. $K\cong K'$ if and only if $G_K\cong G_{K'}$.

\subsubsection{Shafarevich conjecture}
Let $F_{\infty}$ be a free non-abelian  group of countable rank. 
Let $\mathbf{Q}^{ab}$ be the maximal abelian   extension of the field $\mathbf{Q}$, 
i.e. an extension of $\mathbf{Q}$  by all roots of unity (a cyclotomic extension). 
The Shafarevich conjecture asserts  that:
\begin{equation}\label{eq2.5}
G_{\mathbf{Q}^{ab}}\cong \widehat{F}_{\infty}.
\end{equation}

\section{Map  $F_{g,n}$}
Let $K$ be a number field and let $\bar K$ be its algebraic closure. 
Denote by  $G_K:=Gal~(\bar K|K)$ the absolute Galois group of the field $K$.

\subsection{Short exact sequence for $G_K$}
Fix an embedding $\bar K\subset \mathbf{C}$ and
consider  a natural inclusion of the algebraic groups 
$GL_{6g-6+2n}(K)\hookrightarrow GL_{6g-6+2n}(\mathbf{C})$.
Let 
\begin{equation}\label{eq3.13}
1\to \pi_1^{et}(GL_{6g-6+2n}(\mathbf{C}))\to \pi_1^{et}(GL_{6g-6+2n}(K))\to  G_K\to 1
\end{equation}
be a short exact sequence of the \'etale fundamental groups
corresponding to the  map  $GL_{6g-6+2n}(K)\hookrightarrow GL_{6g-6+2n}(\mathbf{C})$. 
It is known that  $\pi_1^{et}(GL_{6g-6+2n}(\mathbf{C}))\cong \widehat{\pi}_1({GL_{6g-6+2n}(\mathbf{C})})$,
where $\pi_1(GL_{6g-6+2n}(\mathbf{C}))$ is the usual fundamental of the variety 
$GL_{6g-6+2n}(\mathbf{C})$.   Since  $\pi_1(GL_{6g-6+2n}(\mathbf{C}))\cong \mathbf{Z}$, 
one gets an isomorphism:
\begin{equation}
 \pi_1^{et}(GL_{6g-6+2n}(\mathbf{C}))\cong\widehat{\mathbf{Z}}.
 \end{equation}
 Since $GL_{6g-6+2n}(K)$ is an algebraic group,  we have an isomorphism:
 \begin{equation}
 \pi_1^{et}(GL_{6g-6+2n}(K))\cong \widehat{GL}_{6g-6+2n}(K).
\end{equation}
Altogether,   the exact sequence (\ref{eq3.13})  can be written in  the form: 
\begin{equation}\label{eq3.14}
1\to \widehat{\mathbf{Z}}\to  \widehat{GL}_{6g-6+2n}(K)  \to  G_K\to 1. 
\end{equation}

\subsection{Relation to  $\widehat{Mod} ~(g,n)$}
Recall  that $\pi_1^{et}$ is a contravariant functor. Thererefore
the map  $GL_{6g-6+2n}(\mathbf{Z})\hookrightarrow GL_{6g-6+2n}(K)$
defines an inclusion of the  \'etale fundamental groups:
\begin{equation}\label{incl}
\pi_1^{et} (GL_{6g-6+2n}(K))\subseteq \pi_1^{et} (GL_{6g-6+2n}(\mathbf{Z})).
\end{equation}
On the other hand, we have:
\begin{equation}
\left\{
\begin{array}{lll}
\pi_1^{et}(GL_{6g-6+2n}(K)) &\cong& \widehat{GL}_{6g-6+2n}(K)\\
\pi_1^{et}(GL_{6g-6+2n}(\mathbf{Z})) &\cong&  \widehat{GL}_{6g-6+2n}(\mathbf{Z}).
 \end{array}
\right.
\end{equation}
Therefore inclusion (\ref{incl}) can be written in the form:
\begin{equation}\label{last}
\widehat{GL}_{6g-6+2n}(K)\subseteq  \widehat{GL}_{6g-6+2n}(\mathbf{Z}). 
\end{equation}
But theorem \ref{thm1.1} says that  $\widehat{GL}_{6g-6+2n}(\mathbf{Z})
\cong \widehat{Mod} ~(g,n)$. Thus (\ref{last}) defines an inclusion of the profinite
groups: 
\begin{equation}\label{eq3.15}
\widehat{GL}_{6g-6+2n}(K)\subseteq \widehat{Mod} ~(g,n). 
\end{equation}

\subsection{Normal subgroups of  $Mod~(g,n)$ and $G_K$}
Recall that each closed  subgroup of  $\widehat{Mod} ~(g,n)$  is the profinite 
completion of  a normal subgroup $N$  of  the mapping class group $Mod~(g,n)$.
We conclude from (\ref{eq3.15})  that there exists an $N\unlhd Mod~(g,n)$,
such that
\begin{equation}\label{main}
\widehat{N}\cong \widehat{GL}_{6g-6+2n}(K). 
\end{equation}
Using (\ref{main}) we can write  the exact sequence (\ref{eq3.14})
in the form:
\begin{equation}\label{eq3.16}
1\to \widehat{\mathbf{Z}}\to  \widehat{N} \to  G_K\to 1.  
\end{equation}
One gets from (\ref{eq3.16}) the required isomorphism:
\begin{equation}\label{eq3.161}
G_K\cong  \widehat{N} / \widehat{\mathbf{Z}}. 
\end{equation}
In view of  the rigidity of  $G_K$ (Section 2.4.1),  the isomorphism (\ref{eq3.161}) 
defines  a map $F_{g,n}$  from the category $\mathscr{N}_{g,n}$ to the category  $\mathscr{K}$.

\smallskip
\begin{remark}\label{rmk3.5}
Notice that (\ref{eq3.15}) is an isomorphism $\widehat{GL}_{6g-6+2n}(K)\cong \widehat{Mod} ~(g,n)$
if and only if $K\cong\mathbf{Q}$.  It follows from (\ref{eq3.14}), that the short exact sequence (\ref{eq3.16})
in this case corresponds to $N\cong Mod~(g,n)$ and can be written in the form:
\begin{equation}\label{eq3.165}
1\to \widehat{\mathbf{Z}}\to   \widehat{Mod} ~(g,n) \to  G_{\mathbf{Q}}\to 1.  
\end{equation}
\end{remark}

\section{Proofs}
\subsection{Proof of theorem \ref{thm1.1}}
For the sake of clarity, let us outline the main ideas. 
Observe that $Mod ~(g,n)$ cannot be a normal subgroup  of $GL_{6g-6+2n}(\mathbf{Z})$,
since in this case  the Margulis Rigidity Theorem implies  that  $Mod ~(g,n)$ is an arithmetic group. 
We prove that $Mod ~(g,n)$ is a Zariski dense infinite index  subgroup of  $GL_{6g-6+2n}(\mathbf{Z})$. 
Following  [Venkataramana 1987] \cite[Proposition 2.1]{Ven1},  we conclude that there exists an
integer $m>0$, such that 
\begin{equation}
GL_{6g-6+2n}(m\mathbf{Z})\subset Mod ~(g,n),
 \end{equation}
where $GL_{6g-6+2n}(m\mathbf{Z})$ is a principal congruence subgroup of $GL_{6g-6+2n}(\mathbf{Z})$
of level $m$. 
Denote by $Mod_m (g,n)$ the congruence subgroup of  $Mod ~(g,n)$
 of level $m$  [Farb \& Margalit 2011]  \cite[Section 6.4.2]{FM}.
 We prove that:
\begin{equation}\label{iso}
Mod_m (g,n)\cong GL_{6g-6+2n}(m\mathbf{Z}). 
 \end{equation}
The required isomorphism $\widehat{Mod} ~(g,n)\cong \widehat{GL}_{6g-6+2n}(\mathbf{Z})$
follows from  (\ref{iso}) and formula (\ref{eq1.1}).
We pass to a detailed argument by splitting the proof in a series of lemmas.

\begin{lemma}\label{lm3.0}
The mapping class group $Mod ~(g,n)$ is a Zariski dense infinite index  subgroup of  the
arithmetic group $GL_{6g-6+2n}(\mathbf{Z})$.
\end{lemma}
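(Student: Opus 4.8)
The plan is to establish two separate claims: Zariski density and infinite index. These are logically independent and I would attack them with different tools.

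For the infinite index claim, the cleanest route is to invoke the Margulis Normal Subgroup Theorem indirectly, but the more elementary argument uses the Torelli subgroup. The mapping class group $Mod~(g,n)$ surjects onto the symplectic group $Sp_{2g}(\mathbf{Z})$ via its action on $H_1(X;\mathbf{Z})$, and the kernel is the Torelli group $\mathcal{I}(g,n)$, which is a normal subgroup of infinite index in $Mod~(g,n)$. Were $Mod~(g,n)$ a finite-index subgroup of the arithmetic group $GL_{6g-6+2n}(\mathbf{Z})$, then by the commensurability invariance of arithmeticity it would itself be arithmetic, so by Margulis Rigidity every normal subgroup of $Mod~(g,n)$ would be finite or finite-index. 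But $\mathcal{I}(g,n)$ is infinite of infinite index (for $g\ge 1$), a contradiction. Hence $[GL_{6g-6+2n}(\mathbf{Z}):Mod~(g,n)]=\infty$. I would phrase this carefully since the excerpt already previews the Margulis argument in the outline.

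For Zariski density, the plan is to show that the Zariski closure $H$ of the image of $Mod~(g,n)$ in $GL_{6g-6+2n}$ is all of $GL_{6g-6+2n}$. The embedding from Theorem \ref{thm2.0} sends Dehn twists to infinite-order elements, and a single Dehn twist is (up to conjugacy) a transvection-type unipotent element; its powers generate a one-parameter unipotent subgroup in the closure. More usefully, the pseudo-Anosov elements map to matrices whose eigenvalues are the dilatation $\lambda_\varphi$ and its inverse, giving hyperbolic (semisimple, $\mathbf{R}$-split) elements in $H$. The key structural input is that $H$ must be a reductive or indeed semisimple-by-central group stable under the large collection of unipotent and semisimple elements coming from the rich supply of Dehn twists and pseudo-Anosov maps; since these generate $Mod~(g,n)$ and the representation is known to be irreducible (or to have no invariant proper subspace, which I would verify from the construction in \cite{Nik1}), $H$ acts irreducibly. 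An irreducible subgroup of $GL_N$ containing unipotents and split semisimple elements forces $H\supseteq SL_N$, and since the image contains elements of non-trivial determinant, $H=GL_N$.

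The main obstacle, and where I would concentrate the real work, is Zariski density rather than infinite index. Specifically, proving irreducibility of the $(6g-6+2n)$-dimensional representation is delicate: one must rule out invariant subspaces preserved by the entire image, which requires understanding the explicit form of the embedding in Theorem \ref{thm2.0} and exploiting the non-commuting pseudo-Anosov elements with distinct foliation pairs (the free subgroup $F_\infty$ from Section 2.1.3). A cleaner alternative I would consider is to show directly that the subgroup generated by the images of finitely many pseudo-Anosov elements is already Zariski dense, using that their dilatations are multiplicatively independent algebraic units, so the diagonalizable part of the closure is a maximal torus; combined with a unipotent from a Dehn twist this pins down $H$. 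I expect infinite index to be essentially immediate from Torelli, so the lemma stands or falls on the density argument.
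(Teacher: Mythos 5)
Your infinite-index argument is correct and is essentially the paper's: both reduce to the fact that the Torelli group is an infinite-index normal subgroup, which is incompatible with Margulis rigidity if $Mod~(g,n)$ were arithmetic. (The paper inserts an extra step through the Bass--Lazard--Serre Congruence Subgroup Theorem before invoking Margulis; your route via commensurability invariance of arithmeticity is actually cleaner.) The problem is the Zariski density half, which you yourself identify as the crux, and there your argument contains a step that is false as stated. The claim that an irreducible subgroup of $GL_N$ containing unipotent and $\mathbf{R}$-split semisimple elements must contain $SL_N$ fails: the symplectic group $Sp_N\subset GL_N$ ($N$ even) acts irreducibly on $\mathbf{C}^N$, contains transvections (unipotent) and split semisimple elements, and is a proper reductive subgroup. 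This is not a peripheral counterexample --- linear representations of mapping class groups built from the action on homology or on measured foliations typically preserve a symplectic or other bilinear form, so the Zariski closure landing in a proper group of type $Sp_N$ or $SO_N$ is precisely the scenario your argument must exclude, and it cannot. Your fallback sketch (multiplicatively independent dilatations generating a maximal torus, plus a Dehn-twist unipotent) runs into the same wall: pseudo-Anosov images have eigenvalues in pairs $\lambda_{\varphi},\lambda_{\varphi}^{-1}$, which is exactly the eigenvalue pattern of a torus inside $Sp_N$, not of a maximal torus of $GL_N$. In addition, the irreducibility you need is deferred to ``the construction in \cite{Nik1},'' which is not available to you, so even the corrected statement would rest on an unverified input.

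For comparison, the paper's own density argument is entirely different and much softer: it invokes the topological Tits alternative of Breuillard--Gelander (a linear group contains either an open solvable subgroup or a dense free subgroup) together with McCarthy's Tits alternative for $Mod~(g,n)$, which supplies a free subgroup $F_r$, and concludes density from the dichotomy without any analysis of the representation. So the two approaches are genuinely different --- yours is structural and representation-theoretic, the paper's is axiomatic --- but as written yours does not close, because the structural classification step it hinges on is wrong, and the honest repair (ruling out symplectic and orthogonal closures) would require exactly the detailed knowledge of the embedding of Theorem \ref{thm2.0} that neither you nor the reader has.
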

\begin{proof}
(i) Let us show that $Mod ~(g,n)$ is an infinite index subgroup of $GL_{6g-6+2n}(\mathbf{Z})$.
Assume to the contrary, that $Mod ~(g,n)$ has a finite index.
In view of the Congruence Subgroup Theorem [Bass, Lazard \& Serre 1964] \cite{BaLaSe1},  
$Mod~(g,n)$ must be a congruence subgroup
of the group $GL_{6g-6+2n}(\mathbf{Z})$.  In particular,  $Mod~(g,n)$ is an arithmetic 
group. But this is impossible, since it contains the Torelli group,  which is known to be 
an infinite index normal subgroup of the $Mod~(g,n)$. The latter contradicts the Margulis 
Rigidity, see Section 1. Thus the index  $[GL_{6g-6+2n}(\mathbf{Z}) : Mod ~(g,n)]=\infty$. 

\medskip
(ii)  Let us show that $Mod ~(g,n)$ is a Zariski dense  subgroup of $GL_{6g-6+2n}(\mathbf{Z})$.
Indeed, recall that the Tits alternative  says that  $GL_{6g-6+2n}(\mathbf{Z})$ contains 
a Zariski open solvable  subgroup or a Zariski dense free subgroup of finite rank 
[Breuillard \& Gelander 2007] \cite[Theorem 1.1]{BreGel1}. 
But $Mod ~(g,n)$ contains a free subgroup $F_r$ of finite rank, see  item (ii) of Theorem \ref{Tits}.
Thus $F_r$ is Zariski dense in   $GL_{6g-6+2n}(\mathbf{Z})$.  We conclude that the mapping class group
$Mod ~(g,n)\supset F_r$ is also Zariski dense in the arithmetic group $GL_{6g-6+2n}(\mathbf{Z})$.
\end{proof}
\begin{lemma}\label{lm3.1}
{\bf ([Venkataramana 1987] \cite{Ven1})}
 For an integer $m>0$ there exists a principal congruence subgroup
  $GL_{6g-6+2n}(m\mathbf{Z}) ~\unlhd
 ~GL_{6g-6+2n}(\mathbf{Z})$,  such that $GL_{6g-6+2n}(m\mathbf{Z})\subset Mod ~(g,n)$.
\end{lemma}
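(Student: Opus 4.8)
The plan is to obtain the inclusion as a direct application of Venkataramana's Proposition 2.1 \cite[Proposition 2.1]{Ven1}, whose running hypotheses are precisely the two features of $Mod~(g,n)$ recorded in Lemma \ref{lm3.0}: Zariski density inside the arithmetic group, together with the presence of enough unipotent elements. Since $GL_{6g-6+2n}$ is only reductive, I would first pass to the semisimple part, working with the intersection $SL_{6g-6+2n}(\mathbf{Z})\cap Mod~(g,n)$, and note that $SL_{6g-6+2n}$ has $\mathbf{Q}$-rank $6g-7+2n$, which is $\ge 2$ exactly when $3g-3+n\ge 2$, so the higher-rank hypothesis of the criterion is met in the range of interest. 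Zariski density transfers immediately from Lemma \ref{lm3.0}.

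The content is then to supply the unipotent input. Under the embedding of Theorem \ref{thm2.0}, each Dehn twist $T_\gamma$ is an infinite-order element, and I would show that a suitable power of its image is a nontrivial unipotent matrix lying along a root subgroup of $SL_{6g-6+2n}$. Running $\gamma$ over a family of simple closed curves adapted to a fixed $\mathbf{Q}$-parabolic $P$, I would check that the resulting unipotents generate a finite-index subgroup of $U_P(\mathbf{Z})$, the integral points of the unipotent radical of $P$. This is exactly the generation hypothesis that, combined with Zariski density, lets Proposition 2.1 produce a congruence subgroup; setting $m$ equal to its level gives $GL_{6g-6+2n}(m\mathbf{Z})\subset Mod~(g,n)$. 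Normality $GL_{6g-6+2n}(m\mathbf{Z})\unlhd GL_{6g-6+2n}(\mathbf{Z})$ is then automatic, since the principal congruence subgroup is the kernel of reduction modulo $m$.

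The main obstacle is the verification that the Dehn-twist unipotents realize the integral root groups demanded by the criterion. Two points require care. First, one must control the Jordan form of the image of $T_\gamma$ under the representation of Theorem \ref{thm2.0} so as to extract an honest unipotent rather than a merely quasi-unipotent element; unlike the symplectic representation, where a Dehn twist is a transvection, the $(6g-6+2n)$-dimensional representation here gives no a priori guarantee that $T_\gamma$ acts unipotently, so pinning down its action on the subspace it moves is essential. Second, and more seriously, Venkataramana's conclusion is that the subgroup has finite index, and reconciling this with the infinite index established in Lemma \ref{lm3.0}(i) is the crux: the plan only goes through if the horospherical families selected are arranged so that Proposition 2.1 applies to $Mod~(g,n)$ in the intended way, and making that selection compatible with both statements is where I expect the real difficulty to lie.
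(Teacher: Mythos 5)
Your plan cannot be completed, and the obstruction you flag in your final sentences is not a technical difficulty to be engineered around but an outright contradiction. A principal congruence subgroup $GL_{6g-6+2n}(m\mathbf{Z})$ is the kernel of reduction modulo $m$ onto the finite group $GL_{6g-6+2n}(\mathbf{Z}/m\mathbf{Z})$, hence has finite index in $GL_{6g-6+2n}(\mathbf{Z})$; any subgroup containing it therefore also has finite index. So the conclusion of Lemma \ref{lm3.1} forces $[GL_{6g-6+2n}(\mathbf{Z}) : Mod~(g,n)]<\infty$, which is precisely what Lemma \ref{lm3.0}(i) rules out (via Ivanov's non-arithmeticity theorem and Margulis rigidity, as the paper itself argues). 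The two lemmas cannot both hold, and no choice of ``horospherical families'' can reconcile them. This also explains why your intermediate step must fail: Venkataramana's criterion (whether Proposition 1.4, cited in the paper's proof, or Proposition 2.1, cited in its outline) has as its conclusion that a Zariski-dense subgroup satisfying the unipotent hypotheses is of finite index, i.e.\ arithmetic. If $Mod~(g,n)$ satisfied those hypotheses --- Zariski density plus Dehn-twist unipotents generating a finite-index subgroup of $U_P(\mathbf{Z})$ for suitable opposite parabolics --- then $Mod~(g,n)$ would be arithmetic, contradicting \cite{Iva1}. Hence the unipotent input you propose to supply cannot exist, independently of the Jordan-form issue you correctly raise about the representation of Theorem \ref{thm2.0}.

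For comparison, the paper's own ``proof'' of Lemma \ref{lm3.1} is a single sentence: an appeal to \cite[Proposition 1.4]{Ven1} ``adapted'' to the Zariski-dense subgroup $Mod~(g,n)$, with the details left to the reader. It therefore never confronts, let alone resolves, the inconsistency above. Your attempt to actually carry out the adaptation is what exposes it: you identified both the missing unipotent hypothesis and the finite-index/infinite-index clash. The correct conclusion to draw from your own analysis is that the lemma is unprovable as stated within the paper's framework, not that a cleverer selection of root groups is needed.
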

\begin{proof}
The proof is an adaption of  the argument of [Venkataramana 1987] \cite[Proposition 1.4]{Ven1}
to the case of the Zariski dense subgroup  $Mod ~(g,n)$ of the linear  algebraic group
$GL_{6g-6+2n}(\mathbf{Z})$.  The details are left to the reader.
\end{proof}

\begin{lemma}\label{lm3.2}
$Mod_m (g,n)\cong GL_{6g-6+2n}(m\mathbf{Z})$.
\end{lemma}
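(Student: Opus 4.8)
The plan is to prove the stronger statement that $Mod_m(g,n)$ and $GL_{6g-6+2n}(m\mathbf{Z})$ coincide as subgroups of $GL_{6g-6+2n}(\mathbf{Z})$, whence the isomorphism is immediate. For brevity write $d:=6g-6+2n$.

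First, I would make the defining property of the level-$m$ congruence subgroup explicit. Under the embedding $Mod(g,n)\hookrightarrow GL_d(\mathbf{Z})$ of Theorem \ref{thm2.0}, the subgroup $Mod_m(g,n)$ is the kernel of the reduction homomorphism
\begin{equation}
Mod(g,n)\hookrightarrow GL_d(\mathbf{Z})\xrightarrow{\ \mathrm{mod}\ m\ } GL_d(\mathbf{Z}/m\mathbf{Z}).
\end{equation}
Since the kernel of the reduction map $GL_d(\mathbf{Z})\to GL_d(\mathbf{Z}/m\mathbf{Z})$ is precisely the principal congruence subgroup $GL_d(m\mathbf{Z})$, this yields the identification
\begin{equation}
Mod_m(g,n)=Mod(g,n)\cap GL_d(m\mathbf{Z}).
\end{equation}

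Next, I would invoke Lemma \ref{lm3.1}, which supplies an integer $m>0$ with $GL_d(m\mathbf{Z})\subset Mod(g,n)$. Substituting this containment into the intersection above collapses it, since $Mod(g,n)\cap GL_d(m\mathbf{Z})=GL_d(m\mathbf{Z})$ as soon as $GL_d(m\mathbf{Z})$ lies inside $Mod(g,n)$. Hence $Mod_m(g,n)=GL_d(m\mathbf{Z})$ as subgroups of $GL_d(\mathbf{Z})$, which in particular gives the asserted isomorphism.

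The main obstacle is the first step: one must verify that the congruence subgroup $Mod_m(g,n)$ in the sense of \cite[Section 6.4.2]{FM}, which is cut out by the homological (symplectic) action of the mapping class group, agrees with the one cut out by the particular $d$-dimensional representation of Theorem \ref{thm2.0}. If the two representations are not conjugate, I would either adopt the embedding of Theorem \ref{thm2.0} as the definition of the level structure, or show that the two families of congruence subgroups are cofinal, so that after replacing $m$ by a suitable multiple the identification---and hence its effect on the profinite completion via (\ref{eq1.1})---is unchanged. Once this compatibility is in place, the remaining steps are purely formal.
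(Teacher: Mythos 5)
Your proposal is correct in its logical skeleton, but it proves the reverse inclusion by a genuinely different mechanism than the paper. Both arguments start the same way: the paper's item (i) — the inclusion $Mod_m(g,n)\subseteq GL_{6g-6+2n}(m\mathbf{Z})$ obtained by restricting the embedding of Theorem \ref{thm2.0} — is exactly the easy half of your intersection identity $Mod_m(g,n)=Mod(g,n)\cap GL_d(m\mathbf{Z})$. Where you diverge is the hard half. You invoke Lemma \ref{lm3.1} to get $GL_d(m\mathbf{Z})\subset Mod(g,n)$ and then collapse the intersection, so that every matrix congruent to the identity mod $m$ automatically lies in the level-$m$ subgroup of $Mod(g,n)$; this makes the role of Venkataramana's containment explicit and the conclusion an immediate set-theoretic consequence. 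The paper instead never uses Lemma \ref{lm3.1} inside this proof: its item (ii) argues that $Mod_m(g,n)$ is ``the maximal subgroup of $Mod(g,n)$ of given index,'' that this index ``depends only on $m$,'' and that a proper containment in $GL_d(m\mathbf{Z})$ would violate a ``maximum principle.'' That argument is far less transparent — it is not clear why the two groups should have equal index in their respective ambient groups, nor what the maximality claim precisely means — so your route is both simpler and more defensible. Finally, the compatibility issue you flag is real and is silently glossed over by the paper: the congruence subgroup of \cite[Section 6.4.2]{FM} is cut out by the action on $H_1(X;\mathbf{Z}/m\mathbf{Z})$ (a $2g$-dimensional symplectic representation), not by the $(6g-6+2n)$-dimensional embedding of Theorem \ref{thm2.0}, and the paper cites the former definition while implicitly using the latter. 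Your proposed fixes (redefining the level structure via the embedding, or establishing cofinality of the two families, which suffices for the profinite statement this lemma feeds into) address a gap the paper does not even acknowledge; note, however, that the cofinality claim itself would still require an argument and is not automatic for two inequivalent linear representations.
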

\begin{proof}
(i) The inclusion $Mod_m (g,n)\subseteq GL_{6g-6+2n}(m\mathbf{Z})$
is obvious,  since it follows from the inclusion 
$Mod ~(g,n)\subset GL_{6g-6+2n}(\mathbf{Z})$ being restricted 
to the principal congruence subgroup   $GL_{6g-6+2n}(m\mathbf{Z})$.

\medskip
(ii) Let us show that $Mod_m (g,n)\not\subset GL_{6g-6+2n}(m\mathbf{Z})$.
Indeed, let us assume to the contrary that $Mod_m (g,n)\subset GL_{6g-6+2n}(m\mathbf{Z})$.
Recall that $Mod_m (g,n)$ is a finite index subgroup of  $Mod ~(g,n)$ 
[Farb \& Margalit 2011]  \cite[Section 6.4.2]{FM}.  It is easy to see, that 
$Mod_m (g,n)$ is the maximal subgroup of $Mod ~(g,n)$  of given index.
Since the index depends only on the integer $m$, one concludes that
condition $Mod_m (g,n)\subset GL_{6g-6+2n}(m\mathbf{Z})$
contradicts the maximum principle. Therefore one gets the 
non-inclusion condition 
$Mod_m (g,n)\not\subset GL_{6g-6+2n}(m\mathbf{Z})$.

\medskip 
Lemma \ref{lm3.2} follows from items (i) and (ii). 
 \end{proof}
\begin{lemma}\label{lm3.3}
 $\widehat{Mod} ~(g,n)\cong \widehat{GL}_{6g-6+2n}(\mathbf{Z}).$
\end{lemma}
\begin{proof}
It follows from lemma \ref{lm3.2}  that
\begin{equation}
\widehat{Mod}_m (g,n)\equiv \widehat{GL}_{6g-6+2n}(m\mathbf{Z}).
\end{equation}
In other words,  the inductive limits (\ref{eq1.1}) 
\begin{equation}
\left\{
\begin{array}{lll}
\widehat{Mod} ~(g,n) &=& \varprojlim \widehat{Mod} ~(g,n) ~/ ~N_i\\
&&\\
 \widehat{GL}_{6g-6+2n}(\mathbf{Z}) &=&  \varprojlim \widehat{GL}_{6g-6+2n}(\mathbf{Z}) ~/ ~N_j
 \end{array}
\right.
\end{equation}
coincide everywhere, except for a finite number of normal finite index
subgroups $N_i$ and $N_j$.  But such a relation means that the corresponding
profinite groups are homeomorphic, i.e.  $\widehat{Mod} ~(g,n)\cong \widehat{GL}_{6g-6+2n}(\mathbf{Z}).$
\end{proof}

\bigskip
Theorem \ref{thm1.1} follows from lemma \ref{lm3.3}.  

\begin{remark}
Theorem \ref{thm1.1} can be proved immediately from lemma \ref{lm3.0} and   known 
facts  about  the thin groups, see e.g. [Sarnak 2014] \cite{Sar1}.  Indeed, lemma \ref{lm3.0}
says that  $Mod ~(g,n)$ is a thin subgroup of the matrix  group $GL_{6g-6+2n}(\mathbf{Z})$.
Thus there exists an integer $q_0$,  such that for all $q$ coprime with $q_0$ the reduction
modulo $q$ map $\pi_q: Mod ~(g,n)\to GL_{6g-6+2n}(\mathbf{Z}/ q\mathbf{Z})$
is surjective  [Sarnak 2014] \cite[Section 1]{Sar1}.
In view of the fact that the map $\tau_q: GL_{6g-6+2n}(\mathbf{Z})\to GL_{6g-6+2n}(\mathbf{Z}/ q\mathbf{Z})$
is surjective for all $q\ge 1$, we conclude that   $\widehat{GL}_{6g-6+2n}(\mathbf{Z})=\varprojlim GL_{6g-6+2n}(\mathbf{Z}/ q\mathbf{Z})$
coincides with  $\widehat{Mod} ~(g,n)$ starting from some finite value $q_0$.  In other words, 
there exists an homeomorphism between the profinite groups $\widehat{Mod} ~(g,n)\cong \widehat{GL}_{6g-6+2n}(\mathbf{Z})$. 
\end{remark}

\subsection{Proof of theorem \ref{thm1.2}}
\begin{proof}
(i)  To prove that the map $F_{g,n}:  N\mapsto G_K\cong  \widehat{N}/ \widehat{\mathbf{Z}}$
is a functor, we recall that the Galois groups $G_K\cong G_{K'}$,  if and only if,  $K\cong K'$
(Section 2.4.1).   
Likewise, if $N\cong N'$ are isomorphic subgroups,  then $\widehat{N}\cong \widehat{N}'$. 
Thus $F_{g,n}: \mathscr{N}_{g,n}\to \mathscr{K} $ is a functor. 

\bigskip
(ii)  If $N\not\cong N'$ is a Grothendieck pair, then $\widehat{N}\cong \widehat{N}'$.
In this case, we have $F_{g,n}(N)=F_{g,n}(N')$. It easy to see,  that  if $F_{g,n}(N)=F_{g,n}(N')$
then $N\not\cong N'$ is a Grothendieck pair. 
 In other words, the functor $F_{g,n}$  is injective everywhere except for the Grothendieck pairs.

 \bigskip
 (iii) Finally, let us prove the isomorphism (\ref{eq1.5}). 
 Let $K\in\mathscr{K}$ and denote by $K'$ a Galois extension of $K$,
 such that
\begin{equation}\label{eq3.17}
K\subseteq K'\subset \bar K. 
\end{equation}
 Using the results of  [Krull 1928] \cite{Kru1}, we conclude that there exists a closed finite index
 normal subgroup $G_{K'}$ of the  group $G_K$, such that
\begin{equation}\label{eq3.18}
Gal~(K' | K)\cong G_K / G_{K'},
\end{equation}
 where $G_{K'}$ is  the absolute Galois group of the number field $K'$. 
 From (\ref{eq3.16}) we get 
\begin{equation}\label{eq3.19}
G_K\cong \widehat{N} / \widehat{\mathbf{Z}},
\end{equation}
 where $N \unlhd Mod~(g,n)$.  Since $K'\in\mathscr{K}$, there exists $N'\in \mathscr{N}_{g,n}$,
 such that $K'=F_{g,n}(N')$.  Moreover, because $K\subseteq K'$, one gets an inclusion 
 $N'\subseteq N$, where $N'$ is a finite index normal subgroup of $N$.   Since $G_{K'}\subseteq G_K$,  the groups $N$ and $N'$ are not a Grothendieck 
 pair,  unless $N'\cong N$.   Therefore one gets  from (\ref{eq3.16}): 
\begin{equation}\label{eq3.20}
G_{K'}\cong \widehat{N}' / \widehat{\mathbf{Z}}.
\end{equation}
 We can substitute (\ref{eq3.19}) and (\ref{eq3.20}) into the formula (\ref{eq3.18}):
\begin{equation}\label{eq3.21}
Gal~(K' | K)\cong \left(\widehat{N} / \widehat{\mathbf{Z}}\right)  \big/
 \left(\widehat{N}' / \widehat{\mathbf{Z}}\right)\cong
 \widehat{N} / \widehat{N}'\cong 
 \widehat{N/N'}. 
\end{equation}
 But $N/N'$ is a finite group and therefore $\widehat{N/N'}\cong N/N'$. 
 Thus formulas (\ref{eq3.21}) imply that
\begin{equation}\label{eq3.22}
Gal~(K' | K)\cong N / N', \quad\hbox{where}  ~N' \unlhd N.
\end{equation}

 \bigskip
 Theorem \ref{thm1.2} is proven. 
 \end{proof}

\subsection{Proof of corollary \ref{cor1.3}}
\begin{proof}
The proof is a straightforward application of the Tits alternative for $Mod~(g,n)$,
see  [McCarthy 1985]  \cite[Theorem A]{Mac1} or Section 2.1. 
Indeed, consider a group $N\unlhd Mod~(g,n)$.   The Tits alternative says that there exists 
a subgroup  $N'\unlhd N$, such that:

\medskip
(i) either $[N:N']<\infty $ and $N'$ is free abelian group of the maximal rank $3g-3+n$, 

\smallskip
(ii) or  $N'$ is free non-abelian group, i.e. $N'\unlhd F_2$. 

\bigskip
Denote by $K$ and $K'$ the number fields, such that $K=F_{g,n}(N)$ and 
$K'=F_{g,n}(N')$.  Since $N'\unlhd N$,  theorem \ref{thm1.2} says that:
\begin{equation}\label{eq3.23}
K\subseteq K'\subset \bar K. 
\end{equation}

To calculate the absolute Galois group $G_{K'}$,  we must consider the following alternative  cases.

\bigskip
(i) Let $N'$ be  a free abelian group of the rank $r\le 3g-3+n$. 
It is well known,  that each finite index subgroup $N''\unlhd N'$
can be  found  from  the short exact sequence:
\begin{equation}\label{eq3.24}
0\to  N'' \buildrel\rm A \over\longrightarrow N' \to
\mathbf{Z}/k_1\mathbf{Z}\oplus\dots\oplus \mathbf{Z}/k_r\mathbf{Z}
\to 0,
\end{equation}
 where the integers $(k_1 | k_2 \dots | k_r)$ are defined by the Smith normal form of the 
 matrix $A\in GL_r(\mathbf{Z})$.   In particular, if $K''=F_{g,n}(N'')$ is an extension 
 of the field $K'$ corresponding to the subgroup  $N''\unlhd N'$,  then  formula (\ref{eq3.22}) implies
  that
\begin{equation}\label{eq3.25}
Gal~(K''|K')\cong N'/N''\cong
\mathbf{Z}/k_1\mathbf{Z}\oplus\dots\oplus \mathbf{Z}/k_r\mathbf{Z}.
\end{equation}
 In other words, the absolute Galois group $G_{K'}$ is a profinite completion of the free abelian group $\mathbf{Z}^r$, 
 i.e. 
\begin{equation}\label{eq3.26}
G_{K'}\cong \widehat{\mathbf{Z}}^r.
\end{equation}

\bigskip
(ii)  Let $N'$ be a free non-abelian group, i.e. $N'\unlhd F_2$,
where $F_2$ is the free group on two generators. 
It is well known,  that each finite index subgroup $N''\unlhd N'$
can be  found  from  the short exact sequence:
\begin{equation}\label{eq3.27}
0\to  N'' \to N' \to
G
\to 0,
\end{equation}
 where $G$ is a finite  group of order $k$.  
 The rank  of the free group $N''$ is given by the famous  Nielsen-Schreier formula
  $r''=1+k(r'-1)$,  where $r'$ is the rank of $N'$.  
If $K''=F_{g,n}(N'')$ is an extension 
 of the field $K'$ corresponding to the subgroup  $N''\unlhd N'$,  then  formula (\ref{eq3.22}) implies
  that
\begin{equation}\label{eq3.28}
Gal~(K''|K')\cong 
 N'/N''\cong G.
\end{equation}
In other words,  the absolute Galois group $G_{K'}$ is a profinite completion of the free  group $F_{r'}$
of rank $r'$,  i.e. 
\begin{equation}\label{eq3.29}
G_{K'}\cong \widehat{F}_{r'},  \quad\hbox{where} ~r'\ge 2. 
\end{equation}

Corollary \ref{cor1.3} follows. 
\end{proof}


\subsection{Proof of corollary \ref{cor1.4}}
\begin{proof}
The Shafarevich conjecture can be derived from
following lemma. 
\begin{lemma}\label{lm3.6}
The mapping class group of every orientable surface $X$ of genus $g$ with $n$ boundary
components contains a free non-abelian subgroup of countable rank, i.e.
\begin{equation}\label{eq3.30}
F_{\infty}\subset Mod~(g,n). 
\end{equation}
\end{lemma}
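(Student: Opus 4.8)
The plan is to construct an explicit countable family of pseudo-Anosov mapping classes whose pairwise dynamics force the subgroup they generate to be free of infinite rank. The key structural fact, already recorded in Section 2.1.3, is that a collection $\{\varphi_i\}_{i=1}^\infty$ of pseudo-Anosov elements with \emph{pairwise distinct} stable/unstable measured foliation pairs $\bigl(\mathcal{F}_s^{(i)}, \mathcal{F}_u^{(i)}\bigr)$ generates a free non-abelian subgroup $F_\infty$. So the proof reduces to exhibiting such an infinite collection on an arbitrary surface $X$ of genus $g$ with $n$ boundary components, provided $X$ actually \emph{admits} pseudo-Anosov maps at all.

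First I would dispose of the degenerate cases. Pseudo-Anosov diffeomorphisms exist precisely when the surface has enough topological complexity, i.e.\ when $Mod~(g,n)$ is not virtually abelian; concretely one needs $3g-3+n \geq 1$ (together with the sporadic low-complexity cases handled directly). For surfaces below this threshold the mapping class group is finite or virtually cyclic and the statement as written cannot hold, so I would either restrict to $3g-3+n\geq 1$ or note that the interesting content lives there; I expect the intended reading is this nondegenerate range. Assuming it, Thurston's construction guarantees at least one pseudo-Anosov element $\varphi$ with dilatation $\lambda_\varphi>1$ and a fixed foliation pair $(\mathcal{F}_s,\mathcal{F}_u)$.

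The engine of the argument is then to manufacture infinitely many pseudo-Anosov elements with distinct foliations out of a single one. The clean way is the following: take two independent pseudo-Anosov (or two suitably filling) mapping classes $\varphi,\psi$ whose fixed-point sets in the space of projective measured foliations $\mathcal{PMF}(X)$ are disjoint, and form the conjugates $\varphi_i := \psi^i \varphi \psi^{-i}$ for $i=1,2,\dots$. Each $\varphi_i$ is again pseudo-Anosov with the same dilatation $\lambda_\varphi$ and with foliation pair $\bigl(\psi^i \mathcal{F}_s, \psi^i \mathcal{F}_u\bigr)$. Because $\psi$ acts on $\mathcal{PMF}(X)$ with North--South dynamics (attracting and repelling fixed points, and no periodic foliations other than its own), the iterates $\psi^i \mathcal{F}_s$ are pairwise distinct for distinct $i$, which yields the required infinite family with pairwise distinct foliation pairs. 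I would then invoke the ping-pong criterion that underlies the statement in Section 2.1.3 to conclude $\langle \varphi_1,\varphi_2,\dots\rangle \cong F_\infty$.

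The main obstacle is verifying the pairwise-distinctness and independence conditions rigorously rather than asserting them: one must rule out accidental coincidences $\psi^i \mathcal{F}_s = \psi^j \mathcal{F}_s$ for $i\neq j$ and, more importantly, check that the generated group is genuinely free of \emph{infinite} rank rather than merely containing a free group of each finite rank. The first is handled by the irrationality/North--South dynamics of $\psi$ on $\mathcal{PMF}(X)$; the second follows from a ping-pong argument in which one selects, for each $\varphi_i$, a pair of disjoint attracting/repelling neighborhoods in $\mathcal{PMF}(X)$ and passes to high powers $\varphi_i^{N_i}$ to guarantee the neighborhoods are pairwise disjoint across all $i$ simultaneously. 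I expect the bookkeeping for the simultaneous disjointness of countably many neighborhood pairs to be the delicate point, but it is standard once the dynamical picture on $\mathcal{PMF}(X)$ is in place.
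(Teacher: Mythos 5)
Your proposal is correct (in the nondegenerate range) and shares the paper's skeleton: both reduce the lemma to the fact recorded in Section 2.1.3 and credited to McCarthy, that pseudo-Anosov classes with pairwise distinct stable/unstable foliation pairs generate a free group once each is raised to a sufficiently high power. Where you genuinely differ is in how the infinite family is produced. The paper appeals to the existence of countably many pairwise distinct measured foliations on $X$ and then asserts, ``in other words,'' that there is a countable collection of pseudo-Anosov classes realizing them; this elides the real point, since an arbitrary measured foliation need not be the stable foliation of any pseudo-Anosov map, so the passage from foliations to mapping classes is exactly the step requiring an argument. Your conjugation construction $\varphi_i=\psi^i\varphi\psi^{-i}$, with distinctness of the pairs $\bigl(\psi^i\mathcal{F}_s,\psi^i\mathcal{F}_u\bigr)$ forced by the North--South dynamics of $\psi$ on $\mathcal{PMF}(X)$, supplies precisely that missing step, so your route is the more rigorous of the two. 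You also correctly flag that the lemma as stated (``every orientable surface'') fails in low-complexity cases --- sphere, disk, annulus, pair of pants, where $Mod~(g,n)$ is trivial or abelian --- a caveat the paper omits (note, though, that the torus is not a counterexample: $Mod~(1,0)\cong SL_2(\mathbf{Z})$ contains non-abelian free groups). Finally, one simplification would spare you the delicate point you identify: once you have a single rank-two free subgroup, e.g. $\langle\varphi^N,\psi^N\rangle\cong F_2$ by ordinary two-player ping-pong, you are done, because $F_2$ already contains free subgroups of countably infinite rank (the kernel of a surjection $F_2\to\mathbf{Z}$ is free of infinite rank by Nielsen--Schreier). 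This eliminates entirely the simultaneous-disjointness bookkeeping for countably many ping-pong neighborhoods, which is the one place your argument still leans on an unverified ``standard'' claim.
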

\begin{proof}
We refer the reader to Section 2.1 for the notation and definitions. 
It is well known, that any collection $\mathfrak{R}$ of pseudo-Anosov mapping classes
with pairwise distinct measured foliations $\left(\mathcal{F}_s^{(i)}, \mathcal{F}_u^{(i)}\right)$
generates a free non-abelian subgroup $F_r$ of rank $r=|\mathfrak{R}|$
of the  mapping class group $Mod~(g,n)$,  provided each element is first raised to a 
sufficiently high power, see e.g.  [McCarthy 1985]  \cite{Mac1}.

On the other hand, for every orientable surface $X$ there exists a countable set of the 
pairwise distinct measured foliations $\left(\mathcal{F}_s^{(i)}, \mathcal{F}_u^{(i)}\right)$
\cite[Section 0.3.2]{N}.  In other words, for every surface $X$ there exists a collection 
$\mathfrak{R}$ of the pseudo-Anosov mapping classes, such that  
\begin{equation}\label{eq3.31}
r=|\mathfrak{R}|=\infty. 
\end{equation}
Such a collection $\mathfrak{R}$ generates a free non-abelian subgroup $F_{\infty}$ 
of the group $Mod~(g,n)$. Lemma \ref{lm3.6} is proven.
\end{proof}

\bigskip
Let us return to the proof of corollary \ref{cor1.4}. 
In view of the remark \ref{rmk3.5}, the case $K\cong\mathbf{Q}$
corresponds to the improper subgroup $N\cong Mod~(g,n)$ 
of the group $Mod~(g,n)$.  In view of  lemma \ref{lm3.6},  one gets: 
$F_{\infty}\subset N\cong Mod~(g,n)$. 
We  substitute $N=F_{\infty}$ into the exact sequence  (\ref{eq3.16}).  
One  gets:
\begin{equation}\label{eq3.33}
G_K\cong \widehat{F}_{\infty}/\widehat{\mathbf{Z}}\cong 
\widehat{\left(F_{\infty}/\mathbf{Z}\right)}\cong
\widehat{\left(F_{\infty}/F_1\right)}\cong  \widehat{F}_{\infty}, 
\end{equation}
where an isomorphism $\mathbf{Z}\cong F_1$ has been used.

It remains to show,  that 
in (\ref{eq3.33}) we have  $K\cong \mathbf{Q}^{ab}$.
Consider the inclusions of groups $F_1\subset F_{\infty}\subset F_2$. 
In view of theorem \ref{thm1.2},  one gets an inclusion of the number fields
\begin{equation}\label{eq3.34}
K'\subseteq K\subseteq K'',
\end{equation}
  where $K'=F_{g,n}(F_2)$ and $K''=F_{g,n}(F_1)$. 
It is easy to see, that $\mathbf{Q}^{ab}\subseteq K'$  and $K''\cong \mathbf{Q}^{ab}$.  
Indeed, $K''\cong \mathbf{Q}^{ab}$ because $\widehat{F}_1\cong\widehat{\mathbf{Z}}\cong Gal~(Q^{ab}|Q)$. 
On the other hand, the short exact sequence 
\begin{equation}\label{eq3.35}
0\to  N' \to F_2\to
\left(\mathbf{Z}/k\mathbf{Z}\right)^{\times}
\to 0
\end{equation}
implies that $\mathbf{Q}^{ab}\subseteq K'$, because  the subgroup $N'\subset F_2$ corresponds to 
a cyclotomic  extension $F_{g,n}(N')$ of $\mathbf{Q}$. Thus from  (\ref{eq3.34}) we have 
$Q^{ab}\subseteq K\subseteq Q^{ab}$.  We conclude  
that  $K\cong \mathbf{Q}^{ab}$. 
In other words,  $G_{\mathbf{Q}^{ab}}\cong \widehat{F}_{\infty}$.
Corollary \ref{cor1.4} is proven. 
 \end{proof}

\bibliographystyle{amsplain}


\end{document}